\newenvironment{proof}[1][Proof]{\textbf{#1.} }{\ \rule{0.5em}{0.5em}}
\newtheorem{definition}{Definition}
\newtheorem{theorem}{Theorem}
\newtheorem{proposition}{Proposition}
\title{A Caputo based SIRS and SIS fractional order models with standard incidence rate and varying population} 
\author{\and E. Okyere\thanks{Department of Mathematics and Statistics, UENR, Sunyani, Ghana, {\tt eric.okyere@uenr.edu.gh}}
\and  J. Ackora-Prah\thanks{Department of Mathematics, KNUST, Kumasi, Ghana, {\tt joseph-prah.cos@knust.edu.gh}}\and F. T. Oduro\thanks{African Institute for Mathematical Sciences, Ghana, {\tt francis@aims.edu.gh}} \thanks{Department of Mathematics, KNUST, Kumasi, Ghana, {\tt  ftoduro@gmail.com}}}
\begin{document}
\maketitle
\begin{abstract}
In the present work, we have introduced and studied two epidemic models that are constructed with Caputo fractional derivative. We considered standard incidence rate and varying population dynamics for SIS and SIRS mathematical models. Model equilibria, basic reproduction numbers are determined and local stability analysis are established for the two fractional dynamical models. Finally, the efficient fracPECE iterative scheme for fractional order deterministic dynamical models is applied to perform numerical simulations.
\end{abstract}

{\bf Keywords:} Fractional-order model, Caputo derivative, fracPECE iterative scheme.

\section{Introduction}
Mathematical models have played role in understanding infectious diseases epidemiology \citep{wow11}. Epidemiological models formulated with system of nonlinear fractional order equations have been studied by several authors and it is still gaining a lot of attention in mathematical biology [see, e.g, \cite{Atangan2019stability, pinto2019diabetes, wang2019stability, jajarmi2019new, ameen2017solution, rihan2019fractional, jan2019modeling, Arafa2012, almeida2018analysis}]. An SEIR compartmental model formulation that describes the dynamical behaviour of an influenza A infection is introduced and analyzed by the authors in \citep{Gonzalez2014} using nonlinear fractional order equations. A co-infection epidemic model for malaria and HIV/AIDS infections with fractional order derivative that captures time delay dynamics is studied \citep{Pinto2016}. A mathematical model that describes the dynamics of an HIV/AIDS infection using Caputo based fractional derivative is studied \citep{silva2019stability}.\\

A Caputo fractional derivative is used to construct a nonlinear epidemic model that captures vertical transmission dynamics \citep{OZLAP2011}.
A deterministic model with fractional order formulation is presented to study the dynamical behavior between viral particles, humoral immune response mediated by the antibodies and susceptible host cells \citep{Abena2019}. In the work of \citep{wow14}, the authors derived and studied a non-integer order mathematical model for Hepatitis C disease. \cite{wow30} formulated and analyzed a nonlinear fractional order initial value problem for the co-infection dynamics of Hepatitis C and HIV. A compartmental model for Hepatitis B infection that includes cure of infected cells is presented and analyzed using system of fractional order nonlinear equations \citep{Salman2017}. A non-standard finite discretization scheme is applied to solve two nonlinear Hepatitis B infection models that are of fractional orders \citep{ullah2018new}. Fractional order deterministic models based on an MSEIR compartmental formulation have recently been introduced in the works done by the authors in  \citep{qureshi2019fractional, almeida2019}\\

\cite{ELSAKAfractional} developed and analyzed an SIS model with bilinear incidence rate and varying populaton dynamics. He formulated the epidemic model using differential equations based on Caputo derivative. The same author in \citep{ELSAKA2013} introduced a Caputo based SIRS and SIR models with varying populaton dynamics and bilinear incidence rates. A fractional SIS model problem with constant population and standard incidence rate has been introduced and studied in \citep{FRACSIShassouna2018}. \cite{el2019dynamical} has derived a fractional order Susceptible-Infected-Recovered-Susceptible (SIRS) model based on homogeneous networks. They applied global stability concepts in the work by \citep{vargas2015volterra} and established global stability for their constructed epidemic model. A fractional dynamical problem has been derived and analyzed for the classic SIR endemic model with constant population size \citep{Okyere2016a}. The authors in \citep{liu2019stability} have derived and analyzed an SIS epidemic model based on fractional dynamical nonlinear equations and complex networks. The recent work by the authors in \citep{mouaouine2018fractional} deals  with mathematical formulation of an SIR model with varying population size, nonlinear incidence and Caputo fractional derivative.\\

The fractional order formulation approach we consider for the SIS and SIRS models in our present study is motivated by work done in \citep{wow9}.  The  modeling concept introduced in \citep{wow9} has been used by many authors \citep[see, e.g,][]{ wow30, wow7, Pinto2016, Okyere2016a}. In this study, we will modify and analyze an SIS mathematical model introduced in \citep{vargas2015volterra}. We will also propose a new SIRS model with fractional order dynamics.\\

The outline for the rest of the work is organized as follows: Section~\ref{supereric} is concern with some useful preliminaries on fractional calculus. We present the first mathematical model which is based on an SIS compartmental modeling formulation with fractional order dynamics in section~\ref{onesec}. We then present an SIRS model with fractional order dynamics in section~\ref{foursec}. Local stability analysis, numerical simulations and discussions are presented for the two dynamical models in their respective sections. We then conclude the study in section~\ref{consec}

\newpage
\section{Preliminaries}\label{supereric}
In this work, our modeling formulation is motivated by well-known Caputo derivative in fractional calculus. Several applications of this fractional derivative in engineering and science have been studied in the text books by the authors in \citep{wow2, wow}.

\begin{definition} \citep{wow}
Fractional integral of order $\alpha$ is defined as
\[I^{\alpha}v(t)=\frac{1}{\Gamma(\alpha)}\int_0^t \! \frac{v(x)}{(t-x)^{1-\alpha}} \, \mathrm{d}x\]
\end{definition}
for $0<\alpha<1,\ t>0.$

\begin{definition} \citep{wow}
Caputo fractional derivative of order $\alpha$ is defined as
\[ D^{\alpha}v(t)=\frac{1}{\Gamma(m-\alpha)}\int_0^t \! \frac{v^{m}(x)}{(t-x)^{\alpha+1-m}} \, \mathrm{d}x.\]
\end{definition}
for $m-1<\alpha<m.$

\section{SIS model with fractional order dynamics}\label{onesec}
In this section, we modify and study an existing SIS model that is characterised by system of nonlinear fractional order equations with standard incidence rate. This type of deterministic compartmental structure (susceptible-infected-susceptible) is used to study the dynamical behaviour of infections that do not confer immunity. The total population is grouped into two sub-populations namely susceptible individuals, $Q_S$ and infected individuals, $Q_I$. The fractional order dynamical model presented and analysed in \citep{vargas2015volterra} is given by the nonlinear system as follows,

\begin{align}\label{equation}
D_{t}^{\alpha}Q_S(t) &= \Lambda- \dfrac{\varphi Q_S Q_I}{Q_S+ Q_I}-\nu Q_S+\omega Q_I\\
D_{t}^{\alpha}Q_I(t) &= \dfrac{\varphi Q_S Q_I}{Q_S+ Q_I}-(\eta+\nu+\omega)Q_I\nonumber
\end{align}

The positive model parameters which includes $\Lambda,\ \beta,\ \nu,\ \eta $ and $\omega$ represent recruitment rate of susceptible individuals corresponding to births and immigration, infection rate, natural death rate, disease induced death rate and the rate at which infectious individuals return to the susceptible class after infectious period respectively.\\

From the nonlinear system~(\ref{equation}), it is not hard to see that, there is a mismatch of time dimension on both sides of the system of equations. The left-hand side has a time dimension of $(time)^{-\alpha}$ and the time dimension of the right-hand side is $(time)^{-1}$. The fractionalization method introduced and studied in \citep{wow9} rectifies this drawback of time dimension.

\newpage
Therefore motivated by the modeling approach in \citep{wow9}, the new modified epidemic model which is an improved version of the fractional dynamical initial value problem~(\ref{equation}) is given by

\begin{align}\label{model1}
D_{t}^{\alpha}Q_S(t) &= \Lambda^{\alpha}- \dfrac{\varphi^{\alpha}Q_S Q_I}{Q_S+Q_I}-\nu^{\alpha}Q_S+\omega^{\alpha}Q_I\\
D_{t}^{\alpha}Q_I(t) &= \dfrac{\varphi^{\alpha}Q_S Q_I}{Q_S+Q_I}-(\eta^{\alpha}+\nu^{\alpha}+\omega^{\alpha})Q_I\nonumber
\end{align}

Knowing that $N=Q_{S}+Q_{I}$, the fractional differential equation describing the dynamics of the total population is given by

\begin{align}\label{modeltotal}
D_{t}^{\alpha}N &= \Lambda^{\alpha}- \eta^{\alpha} Q_{I}-\nu^{\alpha} N
\end{align}

Since the right-hand side of equation~(\ref{modeltotal}) is not equal to zero, it implies that, the total population may vary in time.

\subsection{Model equlilibria and local stablility analysis}\label{AbenaChina}
There are two equilibrium points for the fractional dynamical system~(\ref{model1}) \\ given by
$H_{df}=\bigg(\dfrac{\Lambda^{\alpha}}{\nu^{\alpha}},0\bigg)$ and $H_{en}=\bigg({Q_S}^{*}, {Q_I}^{*}\bigg)$\\

where

 \begin{align*}
 {Q_S}^{*}&=\dfrac{\Lambda^{\alpha}}{\nu^{\alpha}+\big(\eta^{\alpha}+\nu^{\alpha}\big)\big(R_0-1\big)};\\\\
 {Q_I}^{*}&= \dfrac{\big(R_0-1\big)\Lambda^{\alpha}}{\nu^{\alpha}+\big(\eta^{\alpha}+\nu^{\alpha}\big)\big(R_0-1\big)}.
 \end{align*}

$H_{df}$ and $H_{en}$ represent disease-free and endemic equilibrium points respectively.\\

The basic reproduction number $R_0$ associated with the model problem~(\ref{model1}) is given as

$$R_0=\dfrac{\varphi^{\alpha}}{\eta^{\alpha}+\nu^{\alpha}+\omega^{\alpha}}.$$\\

At the endemic equilibrium, we obtain the following two identities given by
\begin{align}
\dfrac{\varphi^{\alpha}{Q_S}^{*}}{{Q_S}^{*}+{Q_I}^{*}}&=\eta^{\alpha}+\nu^{\alpha}+\omega^{\alpha}\label{eric}\\
\Lambda^{\alpha} &= (\eta^{\alpha}+\nu^{\alpha}){Q_I}^{*}+\nu^{\alpha}{Q_I}^{*}\label{eric2}
 \end{align}

\begin{theorem}
The equilibrium point $H_{df}$ (disease-free) of the SIS model~(\ref{model1}) is locally asymptotically stable if
$R_{0}<1$ and unstable if $R_{0}>1.$
\end{theorem}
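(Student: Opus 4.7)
The plan is to invoke the standard Matignon-type stability criterion for Caputo fractional-order systems: at an equilibrium, the system is locally asymptotically stable provided every eigenvalue $\lambda$ of the Jacobian satisfies $|\arg(\lambda)| > \alpha\pi/2$, and unstable whenever some eigenvalue lies in the sector $|\arg(\lambda)| < \alpha\pi/2$. Since the fractional order $\alpha\in(0,1)$ is fixed, a real negative eigenvalue (argument $\pi$) automatically fulfils the stability condition, while a real positive eigenvalue (argument $0$) automatically violates it. So the whole theorem will reduce to signing two real eigenvalues of a $2\times 2$ Jacobian.

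First I would compute the Jacobian of the right-hand side of system~(\ref{model1}) at a general point. The only subtle piece is the standard-incidence term $\varphi^{\alpha}Q_S Q_I/(Q_S+Q_I)$, whose partial derivatives are $Q_I^{2}/(Q_S+Q_I)^{2}$ with respect to $Q_S$ and $Q_S^{2}/(Q_S+Q_I)^{2}$ with respect to $Q_I$; evaluating at $H_{df}=(\Lambda^{\alpha}/\nu^{\alpha},0)$ these collapse to $0$ and $1$ respectively, which is the main place where a careless reader could slip. The Jacobian at $H_{df}$ then takes the upper-triangular form
\begin{equation*}
J(H_{df}) \;=\; \begin{pmatrix} -\nu^{\alpha} & \omega^{\alpha}-\varphi^{\alpha} \\ 0 & \varphi^{\alpha}-(\eta^{\alpha}+\nu^{\alpha}+\omega^{\alpha}) \end{pmatrix},
\end{equation*}
whose eigenvalues can be read off the diagonal: $\lambda_{1}=-\nu^{\alpha}$ and $\lambda_{2}=\varphi^{\alpha}-(\eta^{\alpha}+\nu^{\alpha}+\omega^{\alpha})=(\eta^{\alpha}+\nu^{\alpha}+\omega^{\alpha})(R_{0}-1)$, using the definition of $R_{0}$ given above.

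Next I would check the Matignon condition on each eigenvalue. Since all model parameters are positive, $\lambda_{1}=-\nu^{\alpha}<0$ and therefore $|\arg\lambda_{1}|=\pi>\alpha\pi/2$ for every admissible $\alpha$. The sign of $\lambda_{2}$ is exactly $\mathrm{sgn}(R_{0}-1)$: if $R_{0}<1$ then $\lambda_{2}<0$ so $|\arg\lambda_{2}|=\pi>\alpha\pi/2$, giving local asymptotic stability; if $R_{0}>1$ then $\lambda_{2}>0$ so $|\arg\lambda_{2}|=0<\alpha\pi/2$, producing instability. This case split directly yields the statement of the theorem.

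The main obstacle is not really an obstacle but a bookkeeping pitfall: the standard-incidence nonlinearity is not smoothly extendable to the origin $Q_S+Q_I=0$, so the computation of the partials must be understood as a limit along the invariant face $Q_I=0$. Once that boundary subtlety is handled correctly and the resulting Jacobian shown to be upper triangular (so its eigenvalues are literally the diagonal entries), the rest is just applying the Matignon criterion to two real numbers.
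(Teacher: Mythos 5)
Your proposal is correct and follows essentially the same route as the paper: compute the Jacobian at $H_{df}$, observe it is upper triangular with eigenvalues $-\nu^{\alpha}$ and $(\eta^{\alpha}+\nu^{\alpha}+\omega^{\alpha})(R_{0}-1)$, and apply the $\abs{arg(\lambda)}>\alpha\pi/2$ criterion. If anything, your write-up is slightly more complete than the paper's, since you explicitly treat the instability case $R_{0}>1$ and note the boundary subtlety of the standard-incidence term at $Q_I=0$, whereas the paper only argues the stable case (and contains a typo, writing ``$R_{0}<0$'' where ``$R_{0}<1$'' is meant).
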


\begin{proof}
The Jacobian matrix of the dynamical fractional SIS model~(\ref{model1}) computed at $H_{df}$ is given by
\[J(H_{df})=
\begin{bmatrix}
-\nu^{\alpha} & -\varphi^{\alpha}+\omega^{\alpha}\\\\
0 & \varphi^{\alpha}-\big(\eta^{\alpha}+\nu^{\alpha}+\omega^{\alpha}\big)
\end{bmatrix}
\]

For the equilibrium point $H_{df}$ to be locally asymptotically stable, it is necessary and sufficient to verify  that the all the eigenvalues of matrix $J(H_{df})$ satisfy the stability condition derived and analyzed in \citep{Ahmed2006}.

\begin{equation}\label{condition}
 \abs{arg(\lambda_{i })} >\frac{\alpha \pi}{2}
 \end{equation}

Since the $2\times 2$ matrix $J(H_{df})$ is a diagonal matrix, it is not hard to see that the eigenvalues  are $\lambda_1= -\nu^{\alpha}<0$ and $\lambda_{2}= \varphi^{\alpha}-\big(\eta^{\alpha}+\nu^{\alpha}+\omega^{\alpha}\big)$. For the two eigenvalues to satisfy the stability condition (\ref{condition}), it is sufficient to show that $\lambda_2$ has negative real part.\\

\begin{align*}
\lambda_{2}&= \varphi^{\alpha}-\big(\eta^{\alpha}+\nu^{\alpha}+\omega^{\alpha}\big)\\
 &=\big(\eta^{\alpha}+\nu^{\alpha}+\omega^{\alpha}\big)\bigg[\dfrac{\varphi^{\alpha}}{\big(\eta^{\alpha}+\nu^{\alpha}+\omega^{\alpha}\big)}-1\bigg]\\
 &=\big(\eta^{\alpha}+\nu^{\alpha}+\omega^{\alpha}\big)\big[R_{0}-1\big] <0 \hspace{0.3cm} iff \hspace{0.3cm} R_{0} <0
\end{align*}

Knowing that $\lambda_1=-\nu^{\alpha}<0$ and following the basic manipulations on the second eigenvalue above, we can conclude that both eigenvalues satisfy the stability condition~(\ref{condition}).
\end{proof}\\

\begin{theorem}
The equilibrium point $H_{en}$ of the SIS model~(\ref{model1}) is locally asymptotically stable if
$R_{0}>1$.
\end{theorem}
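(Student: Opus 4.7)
The plan is to verify the Matignon-type stability criterion (\ref{condition}) at $H_{en}$ by computing the Jacobian there and showing that both of its eigenvalues lie in the open left half-plane, which automatically gives $|\arg\lambda_i|>\pi/2 \geq \alpha\pi/2$ for $0<\alpha<1$.

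First I would form the Jacobian of the right-hand side of (\ref{model1}) symbolically. Differentiating the standard-incidence terms yields
\[
J = \begin{bmatrix}
-\dfrac{\varphi^{\alpha}Q_I^{2}}{(Q_S+Q_I)^{2}}-\nu^{\alpha} & -\dfrac{\varphi^{\alpha}Q_S^{2}}{(Q_S+Q_I)^{2}}+\omega^{\alpha}\\[1.2ex]
\dfrac{\varphi^{\alpha}Q_I^{2}}{(Q_S+Q_I)^{2}} & \dfrac{\varphi^{\alpha}Q_S^{2}}{(Q_S+Q_I)^{2}}-(\eta^{\alpha}+\nu^{\alpha}+\omega^{\alpha})
\end{bmatrix}.
\]
Then I would evaluate at $H_{en}$ and use the identity (\ref{eric}) to simplify. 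Introducing the shorthand $p=Q_S^{*}/(Q_S^{*}+Q_I^{*})$ and $q=Q_I^{*}/(Q_S^{*}+Q_I^{*})$, (\ref{eric}) gives $\varphi^{\alpha}p=\eta^{\alpha}+\nu^{\alpha}+\omega^{\alpha}$, equivalently $p=1/R_{0}$ and $q=1-1/R_{0}$; note that $q>0$ precisely because $R_{0}>1$. In particular the $(2,2)$ entry collapses to $\varphi^{\alpha}p^{2}-\varphi^{\alpha}p=-\varphi^{\alpha}pq$, which is the key simplification that makes the subsequent algebra tractable.

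Next I would compute $\operatorname{tr} J(H_{en})$ and $\det J(H_{en})$ in terms of $p,q$ and the original parameters, using $p+q=1$ throughout. A short calculation should give trace $=\eta^{\alpha}+\omega^{\alpha}-\varphi^{\alpha}$, which is negative whenever $R_{0}>1$ since $\varphi^{\alpha}>\eta^{\alpha}+\nu^{\alpha}+\omega^{\alpha}$. For the determinant, after using $\varphi^{\alpha}p=\eta^{\alpha}+\nu^{\alpha}+\omega^{\alpha}$ and telescoping, I expect to reach
\[
\det J(H_{en}) = \varphi^{\alpha}q\bigl(\nu^{\alpha}+\eta^{\alpha}q\bigr),
\]
which is strictly positive under $R_{0}>1$. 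With trace negative and determinant positive, the two eigenvalues are either a pair of negative real numbers or a complex conjugate pair with negative real part; in either case $|\arg\lambda_{1,2}|>\pi/2>\alpha\pi/2$, so (\ref{condition}) holds and $H_{en}$ is locally asymptotically stable.

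The main obstacle I anticipate is purely algebraic: the determinant expansion produces several terms of mixed degree in $\varphi^{\alpha},\nu^{\alpha},\omega^{\alpha},\eta^{\alpha}$ and it is easy to make sign errors, so the crucial step is to eliminate $\omega^{\alpha}$ and $\varphi^{\alpha}$ systematically via (\ref{eric}) and the relation $p+q=1$ before attempting to read off the sign. Once the sign of $\det J$ is secured, invoking the Matignon criterion (\ref{condition}) is immediate and no additional case analysis on $\alpha$ is required.
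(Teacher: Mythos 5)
Your proposal is correct and follows essentially the same route as the paper: compute the Jacobian at $H_{en}$, simplify the $(2,2)$ entry via the identity (\ref{eric}), and verify the Routh--Hurwitz conditions for the resulting $2\times 2$ matrix (your trace~$<0$, determinant~$>0$ is exactly the paper's $a_1>0$, $a_2>0$) before invoking the criterion (\ref{condition}). The only cosmetic difference is that you establish $\det J(H_{en})=\varphi^{\alpha}q(\nu^{\alpha}+\eta^{\alpha}q)>0$ directly from $p+q=1$, whereas the paper reaches the same conclusion by substituting the equilibrium relation $\Lambda^{\alpha}=\nu^{\alpha}Q_S^{*}+(\eta^{\alpha}+\nu^{\alpha})Q_I^{*}$; both are valid.
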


\begin{proof}
The Jacobian matrix of the dynamical fractional SIS model~(\ref{model1}) computed at $H_{en}$ is given by

\[J(H_{en})=
\begin{bmatrix}
 -\bigg(\nu^{\alpha}+\frac{\varphi^{\alpha}({Q_I}^{*})^{2}}{\big({Q_S}^{*}+{Q_I}^{*}\big)^{2}}\bigg)&&&\frac{\varphi ({Q_S}^{*})^{2}}{\big({Q_S}^{*}+{Q_I}^{*}\big)^{2}}+\omega^{\alpha}\\\\\\
\frac{\varphi^{\alpha}({Q_I}^{*})^{2}}{\big({Q_S}^{*}+{Q_I}^{*}\big)^{2}}&&& \frac{\varphi^{\alpha}({Q_S}^{*})^{2}}{\big({Q_S}^{*}+{Q_I}^{*}\big)^{2}}-\big(\eta^{\alpha}+\nu^{\alpha}+\omega^{\alpha}\big)
\end{bmatrix}\]

From $\dfrac{\varphi^{\alpha}({Q_S}^{*})}{{Q_S}^{*}+{Q_I}^{*}}=\eta^{\alpha}+\nu^{\alpha}+\omega^{\alpha}$, matrix $J(H_{en})$ becomes\\

\[J(H_{en})=
\begin{bmatrix}
-\nu^{\alpha}-\frac{\varphi^{\alpha}({Q_I}^{*})^{2}}{\big({Q_S}^{*}+{Q_I}^{*}\big)^{2}}&&&
\frac{\varphi^{\alpha}{Q_S}^{*}{Q_I}^{*}}{\big({Q_S}^{*}+{Q_I}^{*}\big)^{2}}-\big(\eta^{\alpha}+\nu^{\alpha}\big)\\\\\\
\frac{\varphi^{\alpha}({Q_I}^{*})^{2}}{\big({Q_S}^{*}+{Q_I}^{*}\big)^{2}}&&& -\frac{\varphi^{\alpha}{Q_S}^{*}{Q_I}^{*}}{\big({Q_S}^{*}+{Q_I}^{*}\big)^{2}}
\end{bmatrix}\]

\newpage
For the two eigenvalues to the satisfy the stability condition~(\ref{condition}), derived and studied by the authors in \citep{Ahmed2006},
it is necessary and sufficient to verify that the Routh-Hurwitz stability conditions for the characteristics equation of $J(H_{en})$ are satisfied.\\

The two eigenvalues of the $2\times 2$ matrix, $J(H_{en})$ can obtained from the characteristic equation given by

\begin{equation}\label{chart}
\lambda^2+a_1 \lambda+a_2 =0
\end{equation}

where

\begin{align*}
a_{1}&= \nu^{\alpha}+\dfrac{\varphi^{\alpha} {Q_I}^{*}}{\big({Q_S}^{*}+{Q_I}^{*}\big)}>0\\
a_{2}&=\frac{\nu^{\alpha}\varphi^{\alpha}{Q_S}^{*}{Q_I}^{*}}{\big({Q_S}^{*}+{Q_I}^{*}\big)^{2}} +\big(\eta^{\alpha}+\nu^{\alpha}\big) \frac{\varphi^{\alpha}({Q_I}^{*})^{2}}{\big({Q_S}^{*}+{Q_I}^{*}\big)^{2}}\\
&=\dfrac{\varphi^{\alpha}{Q_I}^{*}}{\big({Q_S}^{*}+{Q_I}^{*}\big)^{2}}\bigg[\nu^{\alpha}{Q_I}^{*}+\big(\eta^{\alpha}+\nu^{\alpha}\big){Q_I}^{*}\bigg]
\end{align*}

Also from $\Lambda^{\alpha} = \nu^{\alpha}{Q_I}^{*}+\big(\eta^{\alpha}+\nu^{\alpha}\big){Q_I}^{*} $, $a_2$ can further be simplified as
\begin{align*}
a_{2}=\dfrac{\varphi^{\alpha}{Q_I}^{*}}{\big({Q_S}^{*}+{Q_I}^{*}\big)^{2}} \Lambda^{\alpha} >0
\end{align*}

Since the Routh-Hurwitz stability conditions ($a_1>0, \ a_2>0,$) have been verified, it follows that the two eigenvalues will have negative real parts.
\end{proof}

\subsection{Numerical simulations and discussions}
In this subsection, we present numerical solutions of the SIRS model problem~(\ref{model1}),\\  using the well-known and efficient fracPECE iterative scheme introduced in \citep{Diethelm1999}.  For this purpose, we have used the Matlab code fde12.m designed in \citep{Garrappa2011} for the implementation of the fracPECE iteative scheme \citep{Diethelm1999}. Recently the authors in \citep{silva2019stability, kilicman2018fractional, Okyere2016a} have applied this numerical scheme to solve their nonlinear problems.\\

For our numerical experiments, we considered four different values of $\alpha$ ($0.90,\ 0.95,\ 0.99,\ 1$).  The positive values for the model parameters $\Lambda$ and $\nu$ were adapted from work of the author in \citep{vargas2015volterra}. To demonstrate asymptotic stability of the disease-free equilibrium, we have used the following parameter values: $\Lambda=0.01,\  \beta=0.06,\ \\ \nu=0.01,\ \omega=0.02,\ \eta=0.2$ and initial conditions $Q_{S0}=0.95,\ Q_{I0}=0.05$. Asymptotic stability of the endemic equilibrium is illustrated using the following positive parameter values: $\Lambda=0.01,\ \ \beta=0.45,\ \nu=0.01,\  \omega=0.2,\ \eta=0.05$ and initial conditions $Q_{S0}=0.95,\ Q_{I0}=0.05$. Figures~\ref{fg1} and \ref{fg2} represent trajectories of susceptible and infected individuals respectively illustrating asymptotic stability of the disease-free equilibrium. Local asymptotic stability of the endemic equilibrium is demonstrated in Figures~\ref{fg4} and \ref{fg5}. The sub-plots in Figures~\ref{fg3} and \ref{fg6} demonstrate SI plane phase portraits for the SIS model with fractional order dynamics.

\newpage
\begin{figure}[!htbp]
	\centering
	\includegraphics[scale=0.7]{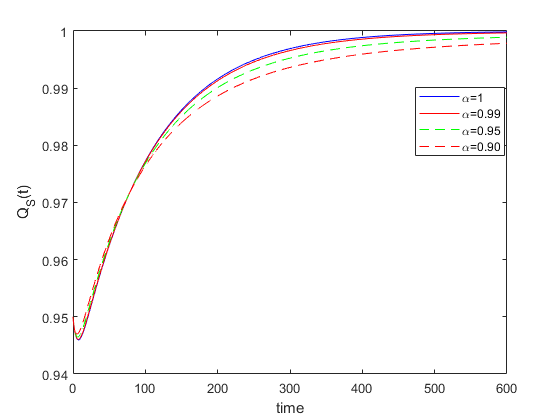}
	\caption{Solution trajectories for susceptible individuals illustrating asymptotic stability of the disease-free equilibrium $H_{df}$ with fractional orders $\alpha \in \{1,\ 0.99,\ 0.95,\ 0.90\}$ and $R_0 <1.$}
	\label{fg1}
\end{figure}

\begin{figure}[!htbp]
	\centering
	\includegraphics[scale=0.7]{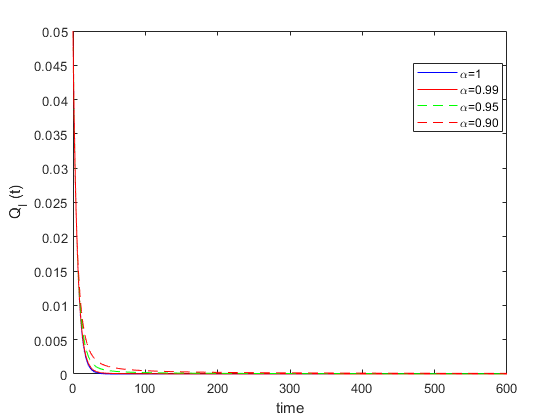}
	\caption{Solution trajectories for infected individuals illustrating asymptotic stability of the disease-free equilibrium $H_{df}$ with fractional orders $\alpha \in \{1,\ 0.99,\ 0.95,\ 0.90\}$ and $R_0<1.$}
	\label{fg2}
\end{figure}

\newpage
\begin{figure}[!htbp]
\subfigure[]{
\includegraphics[scale=0.6]{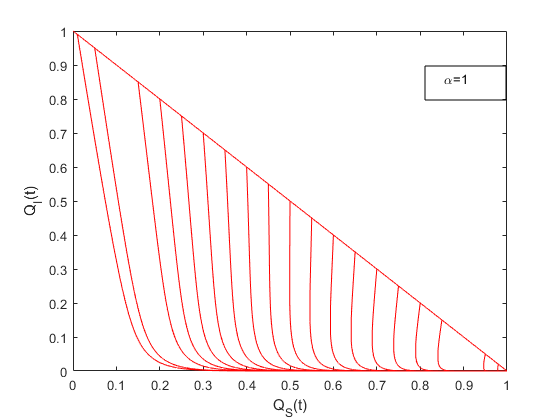}}\hfil
\subfigure[]{
\includegraphics[scale=0.6]{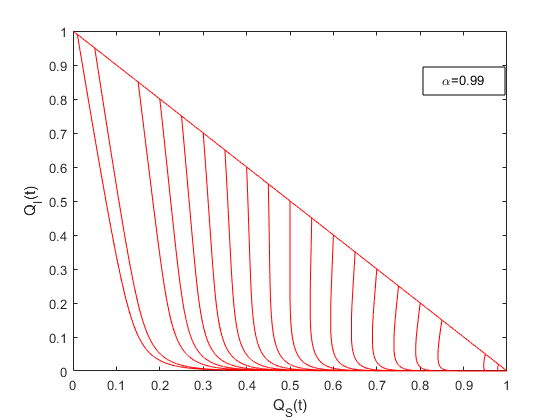}}\hfil
\subfigure[]{
\includegraphics[scale=0.6]{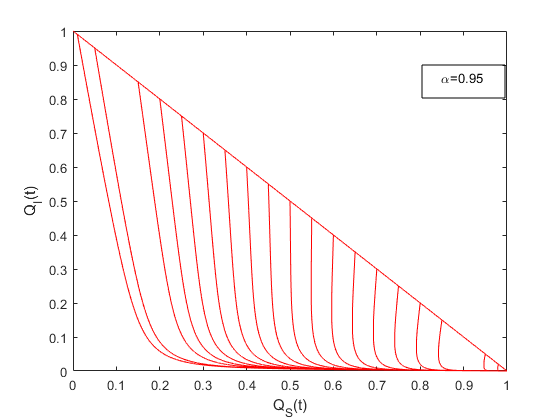}}\hfil
\subfigure[]{\includegraphics[scale=0.6]{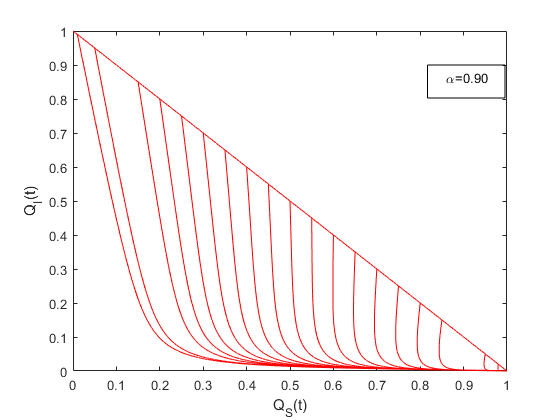}}
\caption{These graphs represent SI plane phase portraits for the SIS model with fractional orders $\alpha \in \{1,\ 0.99,\ 0.95,\ 0.90\}$ and $R_0<1.$}
\label{fg3}
\end{figure}

\newpage
\begin{figure}[!htbp]
	\centering
	\includegraphics[scale=0.7]{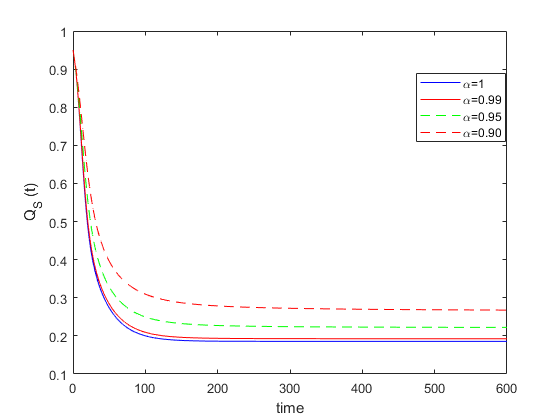}
	\caption{Solution trajectories for susceptible individuals illustrating asymptotic stability of the endemic equilibrium $H_{en}$ with fractional orders $\alpha \in \{1,\ 0.99,\ 0.95,\ 0.90\}$ and $R_0 >1.$}
	\label{fg4}
\end{figure}

\begin{figure}[!htbp]
	\centering
	\includegraphics[scale=0.7]{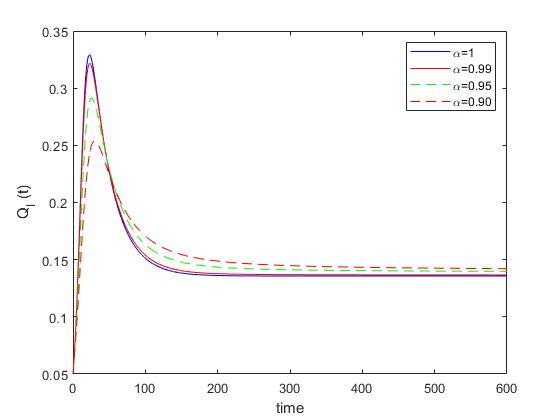}
	\caption{Solution trajectories for infected individuals illustrating asymptotic stability of the disease-free equilibrium $H_{en}$ with fractional orders $\alpha \in \{1,\ 0.99,\ 0.95,\ 0.90\}$ and $R_0> 1.$}
	\label{fg5}
\end{figure}

\newpage
\begin{figure}[!htbp]
\subfigure[]{
\includegraphics[scale=0.6]{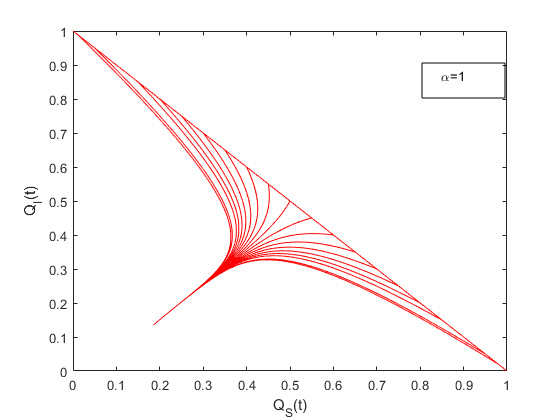}}\hfil
\subfigure[]{
\includegraphics[scale=0.6]{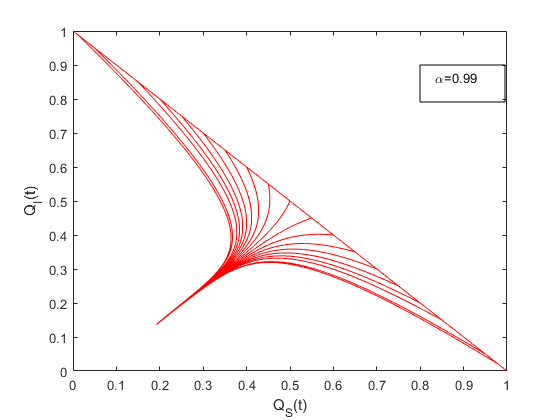}}\hfil
\subfigure[]{
\includegraphics[scale=0.6]{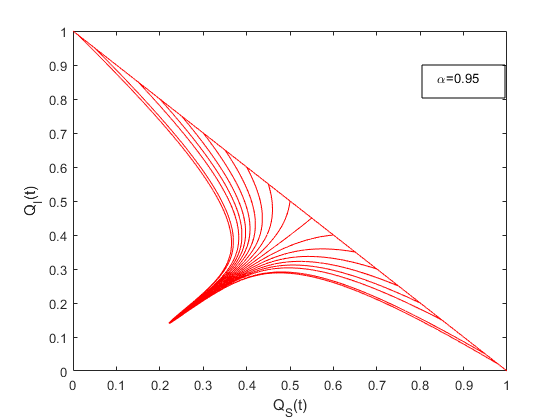}}\hfil
\subfigure[]{\includegraphics[scale=0.6]{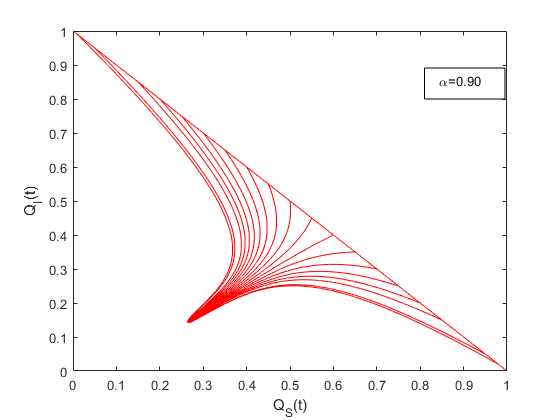}}
\caption{These graphs represent SI plane Phase portraits for the SIS model with fractional orders $\alpha \in \{1,\ 0.99,\ 0.95,\ 0.90\}$ and $R_0>1.$}
\label{fg6}
\end{figure}

\newpage
\section{SIRS model with fractional order dynamics}\label{foursec}
In this section, we introduce a new SIRS model that is characterised by system of nonlinear fractional order equations with standard incidence rate. This type of compartmental structure (susceptible-infected-recovered-susceptible) is used to construct and formulate nonlinear dynamical models for infectious diseases that that do not confer permanent immunity. The total population is grouped into three sub-populations namely susceptible individuals, $Q_S$, infected individuals, $Q_I$ and recovered individuals, $Q_R$. The integer order SIRS model studied in \citep{Vargas2011, mena1992dynamic} is given by

\begin{align}\label{SIRSMODEL_intger}
\frac{dQ_S(t)}{dt}&=\Lambda-\dfrac{\varphi Q_S Q_I}{Q_S+Q_I+Q_R}-\nu Q_S+ \gamma Q_R\nonumber\\\\ \nonumber
\frac{dQ_I(t)}{dt}&=\dfrac{\varphi Q_S Q_I}{Q_S+Q_I+Q_R}-\big(\delta+\kappa+\nu \big)Q_I\\ \nonumber\\
\frac{dQ_R(t)}{dt}&= \kappa Q_I-(\nu +\gamma)Q_R\nonumber \nonumber
\end{align}

where the positive model parameters which include $\Lambda,\ \varphi,\ \nu,\   \delta,\ \kappa$ and $\gamma$ represent recruitment rate, infection rate, natural death rate, recovered rate, disease induced death rate, and the loss of immunity rate respectively.\\

Motivated by the deterministic model~(\ref{SIRSMODEL_intger}), analyzed by the authors \citep{Vargas2011, mena1992dynamic} and the fractional order modeling approach proposed in \citep{wow9}, the new fractional order dynamical system we consider in this study is given by

\begin{align}\label{SIRSMODEL}
D_{t}^{\alpha}Q_S(t)&=\Lambda^{\alpha}-\dfrac{\varphi^{\alpha} Q_S Q_I}{Q_S+Q_I+Q_R}-\nu^{\alpha}Q_S+ \gamma^{\alpha}Q_R\nonumber\\\\ \nonumber
D_{t}^{\alpha}Q_I(t)&=\dfrac{\varphi^{\alpha} Q_S Q_I}{Q_S+Q_I+Q_R}-\big(\delta^{\alpha}+\kappa^{\alpha}+\nu^{\alpha}\big)Q_I\\ \nonumber\\
D_{t}^{\alpha}Q_R(t)&= \kappa^{\alpha}Q_I-(\nu^{\alpha}+\gamma^{\alpha})Q_R\nonumber \nonumber
\end{align}

Knowing that $N=Q_{S}+Q_{I}+Q_{R}$, the fractional order equation describing the dynamics of the total population is given by

\begin{align}\label{modeltotal2}
D_{t}^{\alpha}N &= \Lambda^{\alpha}- \delta^{\alpha} Q_{I}-\nu^{\alpha} N
\end{align}

Since the right-hand side of equation~(\ref{modeltotal2}) is not equal to zero, it follows that the total population may vary in time.

\newpage
\subsection{Equilibrium Points and local stability analysis}
The disease-free and endemic equilibrium points resulting from equating the right-hand side of the dynamical system~(\ref{SIRSMODEL}) to zero and solving for the states variables $Q_S,\ \ Q_I$ and $Q_R$ yields $P_{df}=\bigg(\frac{\Lambda^{\alpha}}{\nu^{\alpha}},0,0\bigg)$ and
$P_{en}=\big({Q_S}^{*}, {Q_I}^{*}, {Q_R}^{*}\big)$ respectively, where\\

 \begin{align*}
 {Q_S}^{*}&=\dfrac{\Lambda^{\alpha}\big(\gamma^{\alpha}+\kappa^{\alpha}+\nu^{\alpha}\big)}{\delta^{\alpha}\big(\gamma^{\alpha}+\nu^{\alpha}\big)\big(R_0-1\big)+\nu^{\alpha}\big(\gamma^{\alpha}+\kappa^{\alpha}+\nu^{\alpha}\big)R_0}\\\\
 {Q_I}^{*}&=\dfrac{\Lambda^{\alpha}\big(\gamma^{\alpha}+\nu^{\alpha}\big)\big(R_0-1\big)}{\delta^{\alpha}\big(\gamma^{\alpha}+\nu^{\alpha}\big)\big(R_0-1\big)+\nu^{\alpha}\big(\gamma^{\alpha}+\kappa^{\alpha}+\nu^{\alpha}\big)R_0}\\\\
 {Q_R}^{*}&=\dfrac{\Lambda^{\alpha}\kappa^{\alpha}\big(R_0-1\big)}{\delta^{\alpha}\big(\gamma^{\alpha}+\nu^{\alpha}\big)\big(R_0-1\big)+\nu^{\alpha}\big(\gamma^{\alpha}+\kappa^{\alpha}+\nu^{\alpha}\big)R_0}
 \end{align*}\\

The basic reproduction number $R_0$, corresponding to the fractional dynamical model~(\ref{SIRSMODEL}) is given by

$$R_0=\dfrac{\varphi^{\alpha}}{\big(\delta^{\alpha}+\kappa^{\alpha}+\nu^{\alpha}\big)}$$

\begin{theorem}
The equilibrium point $P_{df}$ (disease-free) of the SIRS model~(\ref{SIRSMODEL}) is locally asymptotically stable if
$R_{0}<1$ and unstable if $R_{0}>1.$
\end{theorem}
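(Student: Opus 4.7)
The plan is to mimic the structure of the two SIS-model theorems: linearise the SIRS system about $P_{df}$, exploit the zero entries created by $Q_I^*=Q_R^*=0$ to read off the eigenvalues directly, and then invoke the Matignon/Ahmed criterion~(\ref{condition}).

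First, I would compute the Jacobian of the right-hand side of~(\ref{SIRSMODEL}). The only delicate term is the standard-incidence nonlinearity $g(Q_S,Q_I,Q_R)=\varphi^{\alpha}Q_SQ_I/(Q_S+Q_I+Q_R)$. Its partial derivatives are
\[
\frac{\partial g}{\partial Q_S}=\frac{\varphi^{\alpha}Q_I(Q_I+Q_R)}{(Q_S+Q_I+Q_R)^{2}},\quad
\frac{\partial g}{\partial Q_I}=\frac{\varphi^{\alpha}Q_S(Q_S+Q_R)}{(Q_S+Q_I+Q_R)^{2}},\quad
\frac{\partial g}{\partial Q_R}=-\frac{\varphi^{\alpha}Q_SQ_I}{(Q_S+Q_I+Q_R)^{2}},
\]
all of which simplify drastically at $P_{df}=(\Lambda^{\alpha}/\nu^{\alpha},0,0)$: the first and third vanish because $Q_I^{*}=0$, while the second collapses to $\varphi^{\alpha}$. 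Assembling the matrix, I expect
\[
J(P_{df})=
\begin{bmatrix}
-\nu^{\alpha} & -\varphi^{\alpha} & \gamma^{\alpha}\\
0 & \varphi^{\alpha}-(\delta^{\alpha}+\kappa^{\alpha}+\nu^{\alpha}) & 0\\
0 & \kappa^{\alpha} & -(\nu^{\alpha}+\gamma^{\alpha})
\end{bmatrix}.
\]

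Next, I would observe that because the first column has only one non-zero entry and the $(2,3)$ entry vanishes, $J(P_{df})$ is effectively block-triangular, so its eigenvalues can be read off from the diagonal: $\lambda_{1}=-\nu^{\alpha}$, $\lambda_{2}=\varphi^{\alpha}-(\delta^{\alpha}+\kappa^{\alpha}+\nu^{\alpha})$, and $\lambda_{3}=-(\nu^{\alpha}+\gamma^{\alpha})$. Factoring $\lambda_{2}$ as $(\delta^{\alpha}+\kappa^{\alpha}+\nu^{\alpha})(R_{0}-1)$, in the same spirit as the SIS proof, ties the sign of $\lambda_{2}$ directly to the position of $R_{0}$ relative to $1$.

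Finally, I would apply the Matignon--Ahmed condition~(\ref{condition}). Since $\lambda_{1}$ and $\lambda_{3}$ are strictly negative real numbers independent of $R_{0}$, they satisfy $|\arg(\lambda_{i})|=\pi>\alpha\pi/2$ automatically for $\alpha\in(0,1)$. If $R_{0}<1$ then $\lambda_{2}<0$ and the condition holds for all three eigenvalues, giving local asymptotic stability; if $R_{0}>1$ then $\lambda_{2}>0$ with $\arg(\lambda_{2})=0<\alpha\pi/2$, violating~(\ref{condition}) and producing instability.

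I do not anticipate any serious obstacle: the only non-routine step is the partial-derivative calculation for the standard-incidence term, where one has to be careful that the $Q_R$ in the denominator does not spoil the vanishing at $Q_I^{*}=0$; once that is handled, the block-triangular structure makes the rest of the argument parallel to Theorem~1 for the SIS model.
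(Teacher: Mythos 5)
Your proposal is correct, and its overall architecture (linearise at $P_{df}$, then apply the Matignon/Ahmed condition~(\ref{condition})) matches the paper's. The one place you diverge is in how the eigenvalues of the lower-right $2\times 2$ block are extracted: the paper forms the sub-matrix $E$, writes down its characteristic polynomial $\lambda^{2}+a_{1}\lambda+a_{2}=0$, and verifies the Routh--Hurwitz conditions $a_{1}>0$, $a_{2}>0$ under $R_{0}<1$, whereas you observe that the $(2,3)$ entry of the Jacobian vanishes, so the block is lower triangular and the eigenvalues $\lambda_{2}=\varphi^{\alpha}-(\delta^{\alpha}+\kappa^{\alpha}+\nu^{\alpha})=(\delta^{\alpha}+\kappa^{\alpha}+\nu^{\alpha})(R_{0}-1)$ and $\lambda_{3}=-(\nu^{\alpha}+\gamma^{\alpha})$ can be read off directly. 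Your route is shorter and buys something the paper's proof omits: because you have the eigenvalues explicitly, the instability claim for $R_{0}>1$ follows at once from $\arg(\lambda_{2})=0<\alpha\pi/2$, whereas the paper's Routh--Hurwitz argument only establishes the stability half of the theorem and never returns to the $R_{0}>1$ case. The Routh--Hurwitz detour in the paper would only be necessary if the block were not triangular; here it is, so nothing is lost by your more elementary reading.
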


\begin{proof}
The Jacobian matrix for the dynamical fractional SIRS model computed at the equilibrium, $P_{df}$ is given by

\[J(P_{df})=
 \begin{bmatrix}
 -\nu^{\alpha}&& -\varphi^{\alpha}&& \gamma^{\alpha}\\\\
 0&& \varphi^{\alpha}-\big(\delta^{\alpha}+\kappa^{\alpha}+\nu^{\alpha}\big)&&0\\\\\\
 0&& \kappa^{\alpha}&& -\big(\nu^{\alpha}+\gamma^{\alpha}\big)
 \end{bmatrix}\]

For the point $P_{df}$ to be locally asymptotically stable, it is necessary and sufficient to show that the all the eigenvalues of matrix $J(H_{df})$ satisfy the stability condition~(\ref{condition}) constructed and studied in \citep{Ahmed2006}.

\newpage
From the $3\times 3$ Jacobian matrix, $J(P_{df})$, it is clear that one of the eigenvalues has negative real part ($\lambda_{1}= -\nu^{\alpha} < 0$) and the remaining two eigenvalues can obtained from the $2\times 2$ sub-matrix $E$ given below

 \[E=
 \begin{bmatrix}
 \varphi^{\alpha}-\big(\delta^{\alpha}+\kappa^{\alpha}+\nu^{\alpha}\big)&& 0 \\\\\\
  \kappa^{\alpha} && -\big(\nu^{\alpha}+\gamma^{\alpha}\big)
 \end{bmatrix}
 \]

The remaining eigenvalues will also the satisfy the stability condition~(\ref{condition}),
provided the Routh-Hurwitz stability conditions for the characteristic equation of matrix $E$ are satisfied.\\

The characteristic equation of  the $2\times 2 $ sub-matrix $E$ is given by

\begin{equation}\label{chart}
\lambda^2+a_1 \lambda+a_2=0
\end{equation}

where

  \begin{align*}
  a_{1}&= -\big[\varphi^{\alpha}-\big(\delta^{\alpha}+\kappa^{\alpha}+\nu^{\alpha}\big)\big] +\nu^{\alpha}+\gamma^{\alpha}\\
  &= -\big(\delta^{\alpha}+\kappa^{\alpha}+\nu^{\alpha}\big)\bigg[\dfrac{\varphi^{\alpha}}{\delta^{\alpha}+\kappa^{\alpha}+\nu^{\alpha}}-1\bigg] + \nu^{\alpha}+\gamma^{\alpha}\\\\
  &=\big(\delta^{\alpha}+\kappa^{\alpha}+\nu^{\alpha}\big)\bigg[1-\dfrac{\varphi^{\alpha}}{\delta^{\alpha}+\kappa^{\alpha}+\nu^{\alpha}}\bigg] + \nu^{\alpha}+\gamma^{\alpha}\\\\
  &=\big(\delta^{\alpha}+\kappa^{\alpha}+\nu^{\alpha}\big)\big[1-R_{0}\big] + \nu^{\alpha}+\gamma^{\alpha} >0  \hspace{0.3cm} if \hspace{0.3cm} R_{0} <1\\\\
  a_{2}&=\big(-\nu^{\alpha}-\gamma^{\alpha}\big)\big[\varphi^{\alpha}-\big(\delta^{\alpha}+\kappa^{\alpha}+\nu^{\alpha}\big)\big]\\
  &=-\big(\nu^{\alpha}+\gamma^{\alpha}\big)\big(\delta^{\alpha}+\kappa^{\alpha}+\nu^{\alpha}\big)\bigg[\dfrac{\varphi^{\alpha}}{\delta^{\alpha}+\kappa^{\alpha}+\nu^{\alpha}}-1\bigg] \\\\
  &=\big(\nu^{\alpha}+\gamma^{\alpha}\big)\big(\delta^{\alpha}+\kappa^{\alpha}+\nu^{\alpha}\big)\big[1-R_{0}\big] >0  \hspace{0.3cm}if \hspace{0.3cm} R_0 <1
  \end{align*}

Since the Routh-Hurwitz stability conditions ($a_1>0, \ a_2>0,$) have been verified, it follows that the two eigenvalues will have negative real parts.
\end{proof}

\newpage
We now consider local stability of the equilibrium point $P_{en}$. The Jacobian matrix of the fractional order model~(\ref{SIRSMODEL}) evaluated at $P_{en}$ is given as follows\\

 \[J(P_{en})=
 \begin{bmatrix}
 -\nu^{\alpha}-\frac{\varphi^{\alpha}{Q_I}^{*}\big({Q_I}^{*}+{Q_R}^{*}\big)}{\big({Q_S}^{*}+{Q_I}^{*}+{Q_R}^{*}\big)^{2}}&&-\frac{\varphi^{\alpha}{Q_S}^{*}\big({Q_S}^{*}+{Q_R}^{*}\big)}{\big({Q_S}^{*}+{Q_S}^{*}+{Q_R}^{*}\big)^{2}}&& \gamma^{\alpha}+\frac{\varphi^{\alpha}{Q_S}^{*}{Q_I}^{*}}{\big({Q_S}^{*}+{Q_I}^{*}+{Q_R}^{*}\big)^{2}}\\\\\\
 \frac{\varphi^{\alpha}{Q_I}^{*}\big({Q_I}^{*}+{Q_R}^{*}\big)}{\big({Q_S}^{*}+{Q_I}^{*}+{Q_R}^{*}\big)^{2}}&&\frac{\varphi^{\alpha}{Q_S}^{*}\big({Q_S}^{*}+{Q_R}^{*}\big)}{\big({Q_S}^{*}+{Q_I}^{*}+{Q_R}^{*}\big)}-\big(\delta^{\alpha}+\kappa^{\alpha}+\nu^{\alpha}\big)&&- \frac{\varphi^{\alpha}{Q_S}^{*}{Q_I}^{*}}{\big({Q_S}^{*}+{Q_I}^{*}+{Q_R}^{*}\big)^{2}}\\\\\\
 0&& \kappa^{\alpha} && -\big(\nu^{\alpha}+\gamma^{\alpha}\big)
 \end{bmatrix}
 \]

 Using the identity $\dfrac{\varphi^{\alpha} {Q_S}^{*}}{{Q_S}^{*}+{Q_I}^{*}+{Q_R}^{*}}=\big(\delta^{\alpha}+\kappa^{\alpha}+\nu^{\alpha}\big)$, the Jacobian matrix can be simplified as

 \[ J(P_{en})=
 \begin{bmatrix}
 -\nu^{\alpha}-\frac{\varphi^{\alpha}{Q_I}^{*}\big({Q_I}^{*}+{Q_R}^{*}\big)}{\big({Q_S}^{*}+{Q_I}^{*}+{Q_R}^{*}\big)^{2}}&&-\frac{\varphi^{\alpha}{Q_S}^{*}\big({Q_S}^{*}+{Q_R}^{*}\big)}{\big({Q_S}^{*}+{Q_I}^{*}+{Q_R}^{*}\big)^{2}}&& \gamma^{\alpha}+\frac{\varphi^{\alpha}{Q_S}^{*}{Q_I}^{*}}{\big({Q_S}^{*}+{Q_I}^{*}+{Q_R}^{*}\big)^{2}}\\\\
 \frac{\varphi^{\alpha}{Q_I}^{*}\big({Q_I}^{*}+{Q_R}^{*}\big)}{\big({Q_S}^{*}+{Q_I}^{*}+{Q_R}^{*}\big)^{2}}&&-\frac{\varphi^{\alpha}{Q_S}^{*}{Q_I}^{*}}{\big({Q_S}^{*}+{Q_I}^{*}+{Q_R}^{*}\big)^{2}}&&- \frac{\varphi^{\alpha}{Q_S}^{*}{Q_I}^{*}}{\big({Q_S}^{*}+{Q_I}^{*}+{Q_R}^{*}\big)^{2}}\\\\
 0&& \kappa^{\alpha} && -\big(\nu^{\alpha}+\gamma^{\alpha}\big)
 \end{bmatrix}
 \]

\newpage
 The characteristic equation of matrix $J(P_{en})$ is given as follows
	\begin{equation}\label{eqn11}
	\lambda^3+w_1\lambda^2+w_2\lambda+w_3=0,
	\end{equation}

 where
 \begin{align*}
 w_{1}&=\frac{\varphi^{\alpha}{Q_I}^{*}\big({Q_I}^{*}+{Q_R}^{*}\big)}{\big({Q_S}^{*}+{Q_I}^{*}+{Q_R}^{*}\big)^{2}}      +\frac{\varphi^{\alpha}{Q_S}^{*}{Q_I}^{*}}{\big({Q_S}^{*}+{Q_I}^{*}+{Q_R}^{*}\big)^{2}} +2\nu^{\alpha}+\gamma^{\alpha}\\\\
 w_{2}&=\dfrac{(\varphi^{\alpha})^2 {Q_I}^{*}{Q_S}^{*}\big({Q_I}^{*}+{Q_R}^{*}\big)\big({Q_S}^{*}+{Q_R}^{*}\big)}{\big({Q_S}^{*}+{Q_I}^{*}+{Q_R}^{*}\big)^{4}} + \frac{(\varphi^{\alpha})^2 {Q_S}^{*}({Q_I}^{*})^2 \big({Q_I}^{*}+{Q_R}^{*}\big)}{\big({Q_S}^{*}+{Q_I}^{*}+{Q_R}^{*}\big)^{4}} \\&+ \frac{\kappa^{\alpha}\varphi^{\alpha}{Q_S}^{*}{Q_I}^{*}}{\big({Q_S}^{*}+{Q_I}^{*}+{Q_R}^{*}\big)^{2}} + \frac{\nu^{\alpha}\varphi^{\alpha}{Q_I}^{*}\big({Q_I}^{*}+{Q_R}^{*}\big)}{\big({Q_S}^{*}+{Q_I}^{*}+{Q_R}^{*}\big)^{2}} + \frac{2\nu^{\alpha}\varphi^{\alpha}{Q_S}^{*}{Q_I}^{*}}{\big({Q_S}^{*}+{Q_I}^{*}+{Q_R}^{*}\big)^{2}} \\& + \frac{\gamma^{\alpha}\varphi^{\alpha}{Q_I}^{*}\big({Q_I}^{*}+{Q_R}^{*}\big)}{\big({Q_S}^{*}+{Q_I}^{*}+{Q_R}^{*}\big)^{2}} + \frac{\gamma^{\alpha}\varphi^{\alpha}{Q_S}^{*}{Q_I}^{*}}{\big({Q_S}^{*}+{Q_I}^{*}+{Q_R}^{*}\big)^{2}} + \nu^{\alpha}\gamma^{\alpha}+ (\nu^{\alpha})^{2}\\ \\
 w_{3}&=\frac{\nu^{\alpha}(\varphi^{\alpha})^2{Q_S}^{*}{Q_I}^{*}\big({Q_I}^{*}+{Q_R}^{*}\big)\big({Q_S}^{*}+{Q_R}^{*}\big)}{\big({Q_S}^{*}+{Q_I}^{*}+{Q_R}^{*}\big)^{4}} + \frac{\nu^{\alpha}(\varphi^{\alpha})^2({Q_I}^{*})^2 {Q_S}^{*}\big({Q_I}^{*}+{Q_R}^{*}\big)}{\big({Q_S}^{*}+{Q_I}^{*}+{Q_R}^{*}\big)^{4}}\\& +
  \frac{\gamma^{\alpha}(\varphi^{\alpha})^2{Q_S}^{*}{Q_I}^{*}\big({Q_I}^{*}+{Q_R}^{*}\big)\big({Q_S}^{*}+{Q_R}^{*}\big)}{\big({Q_S}^{*}+{Q_I}^{*}+{Q_R}^{*}\big)^{4}} + \frac{\gamma^{\alpha}(\varphi^{\alpha})^2({Q_I}^{*})^2 {Q_S}^{*}\big({Q_I}^{*}+{Q_R}^{*}\big)}{\big({Q_S}^{*}+{Q_I}^{*}+{Q_R}^{*}\big)^{4}} \\&+
  \frac{\kappa^{\alpha}\nu^{\alpha}\varphi^{\alpha}{Q_S}^{*}{Q_I}^{*}}{\big({Q_S}^{*}+{Q_I}^{*}+{Q_R}^{*}\big)^{2}} + \frac{\gamma^{\alpha}\nu^{\alpha}\varphi^{\alpha}{Q_S}^{*}{Q_I}^{*}}{\big({Q_S}^{*}+{Q_I}^{*}+{Q_R}^{*}\big)^{2}} - \frac{\kappa^{\alpha}\gamma^{\alpha}\varphi^{\alpha}{Q_I}^{*}\big({Q_I}^{*}+{Q_R}^{*}\big)}{\big({Q_S}^{*}+{Q_I}^{*}+{Q_R}^{*}\big)^{2}}\\&+
  \frac{(\nu^{\alpha})^2\varphi^{\alpha}{Q_S}^{*}{Q_I}^{*}}{\big({Q_S}^{*}+{Q_I}^{*}+{Q_R}^{*}\big)^{2}}
 \end{align*}

 Let $D(q)$ represent  the discriminant of a polynomial function $q$ given by
 $$q(x)=x^3+w_1x^2+w_2x+w_3$$

  then
	
\begin{equation}\label{eqn13}
	D(q)=
	\begin{vmatrix}
	1&w_1&w_2&w_3&0\\
	0&1&w_1&w_2&w_3\\
	3&2w_1&w_2&0&0\\
	0&3&2w_1&w_2&0\\
	0&0&3&2w_1&w_2
	\end{vmatrix}
	=18w_1 w_2 w_3+(w_1 w_2)^2-4 w_3 w_1^2-4 w_2^2-27 w_3^2.
	\end{equation}

\newpage
Motivated by the fractional order stability approach in \citep{Ahmed2006} and the recent applications of this concept in
\citep{silva2019stability, kilicman2018fractional, Salman2017, Pinto2016}, we obtain the proposition below

\begin{proposition}
	One assume that $P_{en}$ exists in $\mathbb{R}^3_+.$
	\begin{enumerate}[(i).]
\item If $D(q)>0$  and the Routh-Hurwitz condition are satisfied, i.e.,
     $w_1>0$, $w_3>0$, $w_1$ $w_2>w_3$, then $P_{en}$ is locally asymptotically stable.
\item If $D(q)<0$, $w_1\geq0$, $w_2\geq0$, $w_3>0$, $\alpha<2/3 $ then $P_{en}$ is locally asymptotically stable.
\item If $D(q)<0$, $w_1<0$, $w_2<0$, $\alpha>2/3 $ then all roots of $q(\lambda)=0$ satisfy the stability condition $\abs{arg(\lambda)} >\frac{\alpha \pi}{2}$.
\item If $D(q)<0$, $w_1>0$, $w_2>0$, $w_1 w_2=w_3$  $\forall \alpha\in(0,1]$ then $P_{en}$ is locally asymptotically stable.
\item $w_3>0$ is a necessary condition for $P_{en}$ to be locally asymptotically stable.
	\end{enumerate}
\end{proposition}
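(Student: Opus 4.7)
The plan is to reduce each of the five items to the fractional eigenvalue criterion recalled in equation~(\ref{condition}): $P_{en}$ is locally asymptotically stable exactly when every root $\lambda$ of the characteristic cubic $q(\lambda)=\lambda^3 + w_1\lambda^2 + w_2\lambda + w_3$ satisfies $|\arg(\lambda)| > \alpha\pi/2$. The sign of the discriminant $D(q)$ then classifies the root pattern in the standard way: $D(q)>0$ forces three distinct real roots, while $D(q)<0$ forces exactly one real root together with a complex conjugate pair. I would organise the proof around these two regimes.

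For part (i), if $D(q)>0$ then all eigenvalues are real, so the sector condition reduces to asking that they all be strictly negative. This is precisely the classical Routh--Hurwitz test for the monic cubic, giving $w_1>0$, $w_3>0$, $w_1 w_2 > w_3$, and the conclusion is immediate since $\arg(\lambda)=\pi > \alpha\pi/2$ for all admissible $\alpha$.

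For parts (ii)--(iv), I would write the roots as $\lambda_0$ and $\mu\pm i\omega$ with $\omega>0$ and use Vieta's formulas
\begin{align*}
\lambda_0 + 2\mu &= -w_1, \\
2\lambda_0\mu + \mu^2 + \omega^2 &= w_2, \\
\lambda_0(\mu^2+\omega^2) &= -w_3.
\end{align*}
Since $\mu^2+\omega^2>0$, the third identity forces $\lambda_0<0$ whenever $w_3>0$, so the real root automatically satisfies the fractional criterion and only the argument of $\mu+i\omega$ remains to be controlled. In part (ii) the sign pattern $w_1,w_2\geq 0$ bounds the real part $\mu$ from above, and the restriction $\alpha<2/3$ (equivalently $\alpha\pi/2<\pi/3$) widens the admissible sector enough to contain the complex pair; in part (iii) the reversed signs $w_1,w_2<0$ may push $\mu$ positive, but the complementary requirement $\alpha>2/3$ still keeps $|\arg(\mu+i\omega)|$ above $\alpha\pi/2$; in part (iv) substituting $w_1 w_2 = w_3$ into the Vieta identities forces $\mu=0$, so the complex pair is purely imaginary and its argument equals $\pi/2 > \alpha\pi/2$ for every $\alpha\in(0,1)$. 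Each of these three cases is essentially a specialisation of the general cubic-root classification in \citep{Ahmed2006}, which I would cite rather than rederive.

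For part (v), I would use $w_3 = -\lambda_1\lambda_2\lambda_3$. If $w_3\leq 0$ then either some eigenvalue vanishes, making the sector condition fail on that root, or the signs of the real parts force at least one eigenvalue into the closed right half plane, again violating the fractional criterion; hence $w_3>0$ is a necessary condition. The main obstacle I anticipate is the geometric estimate in parts (ii) and (iii), namely extracting the sharp threshold $\alpha=2/3$ from the Vieta identities by tracking how $\mu$ and $\omega$ depend on the signs of $w_1$ and $w_2$. The cleanest route will be to appeal directly to the cubic classification in \citep{Ahmed2006} instead of deriving the $2/3$ cutoff from scratch for our particular coefficient expressions.
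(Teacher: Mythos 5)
The first thing to note is that the paper does not actually prove this proposition: it is imported, with citation, from \citep{Ahmed2006}, so your Vieta-based derivation already goes beyond what the source text offers, and your fallback plan of ``appealing directly to the cubic classification in \citep{Ahmed2006}'' is in fact exactly what the paper does. Measured on its own merits, your strategy is sound for items (i), (ii), (iv) and (v), and the $2/3$ threshold does fall out of your identities exactly as you hope: in (ii), $w_1\ge 0$ gives $\lambda_0\le -2\mu$, so when $\mu>0$ the condition $w_2\ge 0$ gives $\mu^2+\omega^2\ge -2\lambda_0\mu\ge 4\mu^2$, hence $\omega\ge\sqrt{3}\,\mu$ and $\abs{\arg(\mu\pm i\omega)}\ge \pi/3>\alpha\pi/2$; in (iv), the identity $w_1w_2-w_3=-2\mu\bigl[(\lambda_0+\mu)^2+\omega^2\bigr]$ confirms that $w_1w_2=w_3$ forces $\mu=0$, although the purely imaginary pair then has $\abs{\arg}=\pi/2$, so asymptotic stability holds only for $\alpha<1$ and fails at the endpoint $\alpha=1$ included in the statement (the original source writes $\alpha\in[0,1)$).

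The genuine gap is item (iii), which your argument cannot close because the statement is false as printed. Take $q(x)=(x+10)(x^2-12x+37)=x^3-2x^2-83x+370$, with roots $-10$ and $6\pm i$: then $D(q)<0$, $w_1=-2<0$, $w_2=-83<0$, yet $\abs{\arg(6\pm i)}=\arctan(1/6)\approx 0.165<\alpha\pi/2$ for every $\alpha>2/3$, so the complex pair violates the stated sector condition; and since $\arg(-10)=\pi$, the roots do not all satisfy the reversed inequality $\abs{\arg(\lambda)}<\alpha\pi/2$ that appears in \citep{Ahmed2006} either. The structural reason your plan fails here is that the hypotheses $w_1<0$, $w_2<0$ only yield $\lambda_0\mu<0$ and $\lambda_0+2\mu>0$ from the Vieta identities, which place no lower bound whatsoever on $\omega/\mu$, so there is no way to ``keep $\abs{\arg(\mu+i\omega)}$ above $\alpha\pi/2$.'' Item (iii) should be dropped or restated; the remaining four items can be proved exactly along the lines you sketch, provided you carry out the two computations above rather than deferring them to the cited source.
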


 \subsection{Numerical simulations and discussions}
This subsection deals with numerical approximation of the new SIRS fractional epidemic model~(\ref{SIRSMODEL}). For our numerical illustrations, we have applied the fracPECE iterative scheme developed by \cite{Diethelm1999}. We have also used the Matlab code fde12.m designed by \cite{Garrappa2011} for the fracPECE iterative scheme.\\

To demonstrate local asymptotic stability of the disease-free point, we considered the parameter values: $\Lambda=0.01,\  \beta=0.06,\ \nu=0.01,\   \delta=0.15,\  \kappa=0.3,\ \gamma=0.02$ and initial conditions $Q_{S0}=0.95,\ Q_{I0}=0.05,\ Q_{R0}=0$. Local asymptotic stability of the endemic point is illustrated by setting $\Lambda=0.01,\  \beta=0.5,\ \nu=0.01,\  \delta=0.015,\  \kappa=0.2,\ \gamma=0.02$ and initial conditions $Q_{S0}=0.95,\ Q_{I0}=0.05,\ Q_{R0}=0$. Figures~\ref{fg7}, \ref{fg8} and \ref{fg9} shows trajectories of susceptible, infected  and recovered individuals respectively for different values of $\alpha$ illustrating asymptotic stability of the disease-free equilibrium. Local asymptotic stability of the endemic equilibrium is demonstrated in Figures~\ref{fg11}, \ref{fg12} and \ref{fg13}. The sub-plots in Figures~\ref{fg10} and \ref{fg14} illustrate SI plane phase portraits for the SIRS model.

\begin{figure}[!htbp]
	\centering
	\includegraphics[scale=0.7]{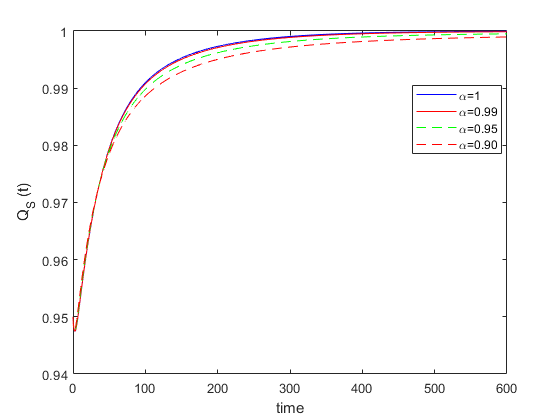}
	\caption{Solution trajectories for susceptible individuals illustrating asymptotic stability of the disease-free equilibrium $H_{df}$ with fractional orders $\alpha \in \{1,\ 0.99,\ 0.95,\ 0.90\}$ and $R_0 <1.$}
	\label{fg7}
\end{figure}

\begin{figure}[!htbp]
	\centering
	\includegraphics[scale=0.7]{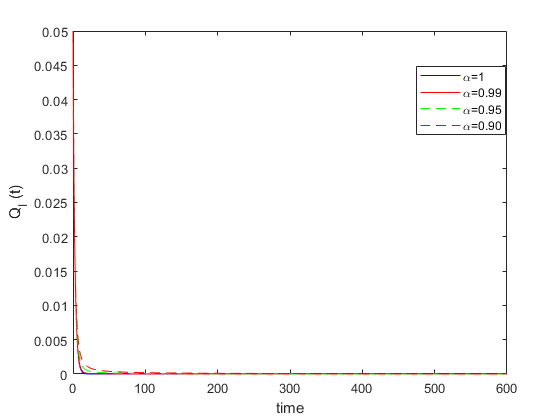}
	\caption{Solution trajectories for infected individuals illustrating asymptotic stability of the disease-free equilibrium $H_{df}$ with fractional orders $\alpha \in \{1,\ 0.99,\ 0.95,\ 0.90\}$ and $R_0 <1.$}
	\label{fg8}
\end{figure}

\newpage
\begin{figure}[!htbp]
	\centering
	\includegraphics[scale=0.7]{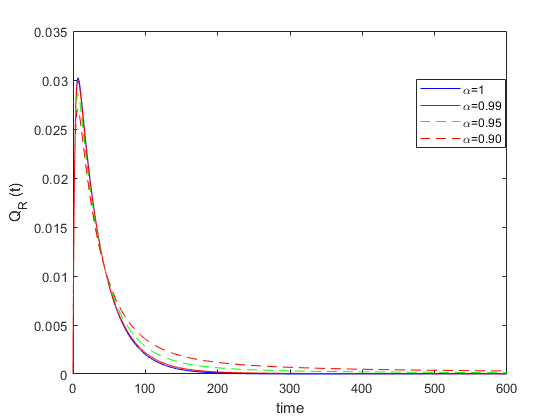}
	\caption{Solution trajectories for recovered individuals illustrating asymptotic stability of the disease-free equilibrium $H_{df}$ with fractional orders $\alpha \in \{1,\ 0.99,\ 0.95,\ 0.90\}$ and $R_0 <1.$}
	\label{fg9}
\end{figure}

\newpage
\begin{figure}[!htbp]
\subfigure[]{
\includegraphics[scale=0.6]{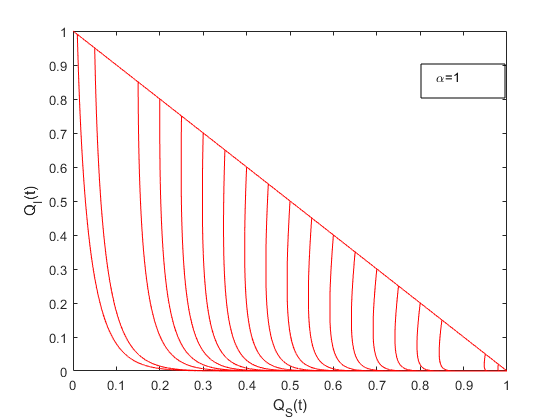}}\hfil
\subfigure[]{
\includegraphics[scale=0.6]{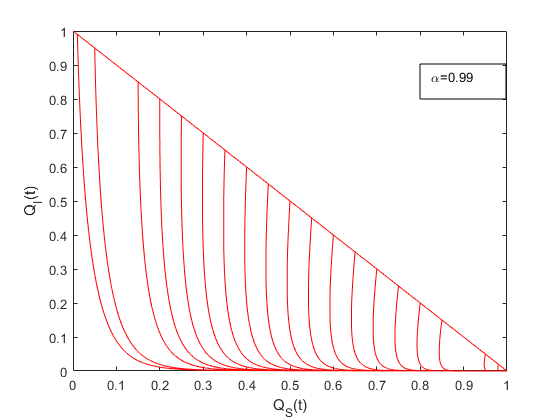}}\hfil
\subfigure[]{
\includegraphics[scale=0.6]{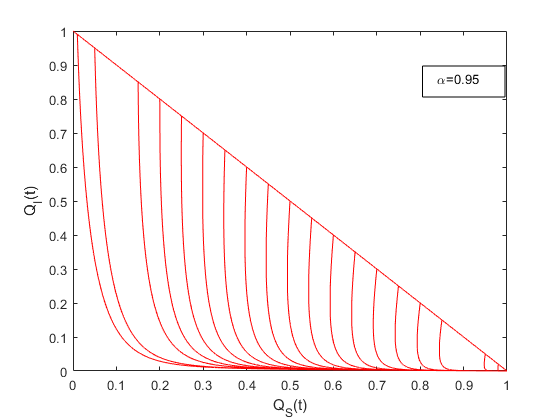}}\hfil
\subfigure[]{\includegraphics[scale=0.6]{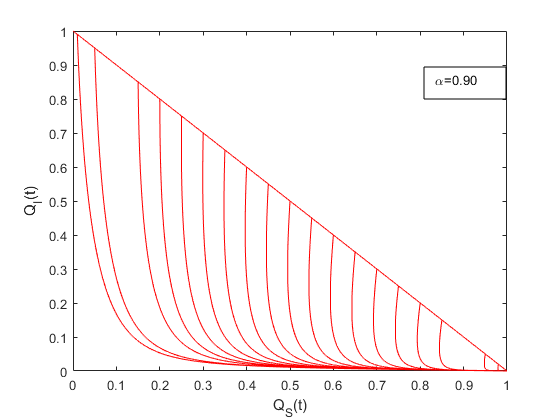}}
\caption{These sub-plots represent SI plane Phase portraits for the SIRS model with fractional orders $\alpha \in \{1,\ 0.99,\ 0.95,\ 0.90\}$ and $R_0<1.$}
\label{fg10}
\end{figure}

\newpage
\begin{figure}[!htbp]
	\centering
	\includegraphics[scale=0.7]{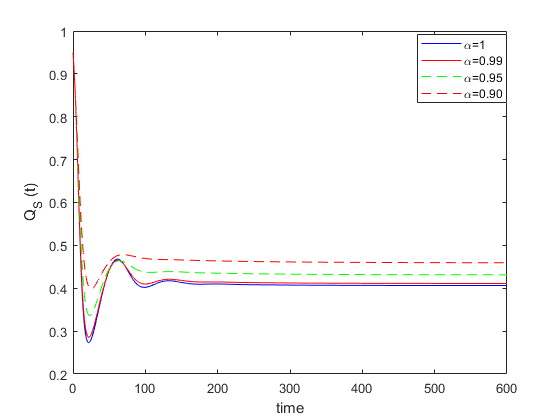}
	\caption{Solution trajectories for susceptible individuals illustrating asymptotic stability of the endemic equilibrium $P_{en}$ with fractional orders $\alpha \in \{1,\ 0.99,\ 0.95,\ 0.90\}$ and $R_0 >1.$}
	\label{fg11}
\end{figure}

\begin{figure}[!htbp]
	\centering
	\includegraphics[scale=0.7]{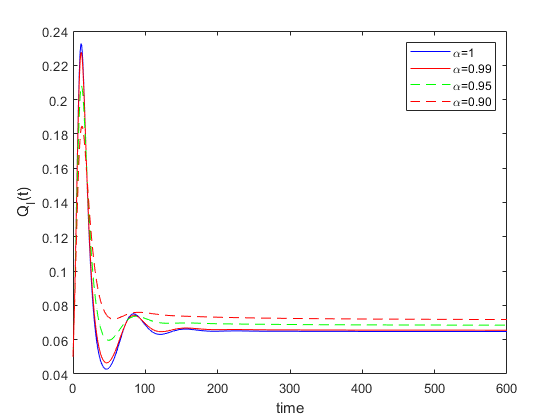}
	\caption{Solution trajectories for infected individuals illustrating asymptotic stability of the endemic equilibrium $P_{en}$ with fractional orders $\alpha \in \{1,\ 0.99,\ 0.95,\ 0.90\}$ and $R_0 >1.$}
	\label{fg12}
\end{figure}

\newpage
\begin{figure}[!htbp]
	\centering
	\includegraphics[scale=0.7]{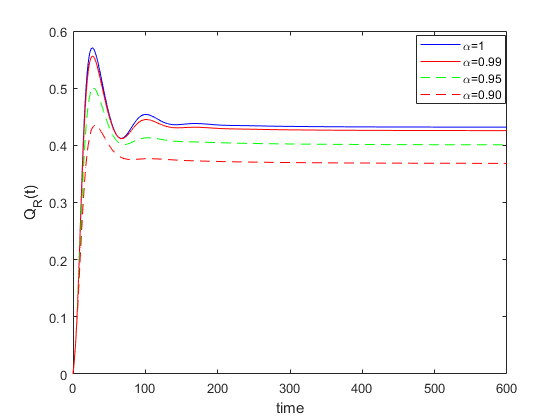}
	\caption{Solution trajectories for recovered individuals illustrating asymptotic stability of the endemic equilibrium $P_{en}$ with fractional orders $\alpha \in \{1,\ 0.99,\ 0.95,\ 0.90\}$ and $R_0 >1.$}
	\label{fg13}
\end{figure}

\newpage
\begin{figure}[!htbp]
\subfigure[]{
\includegraphics[scale=0.6]{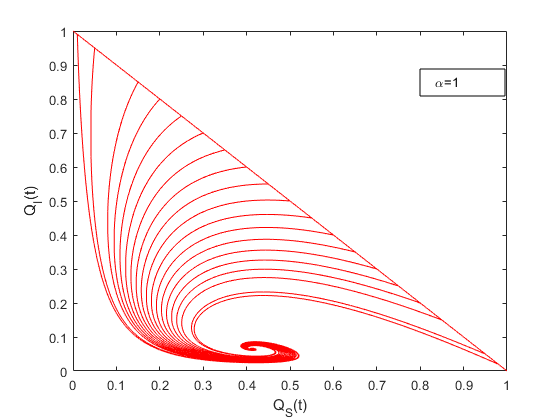}}\hfil
\subfigure[]{
\includegraphics[scale=0.6]{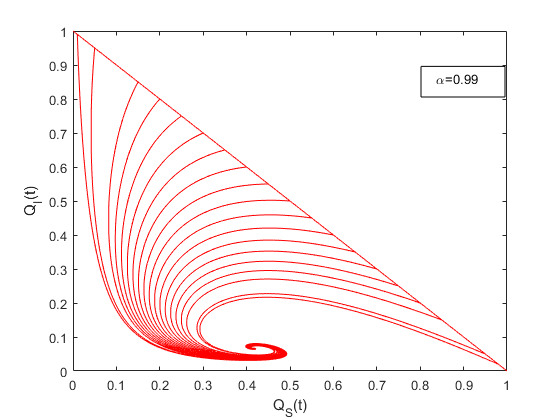}}\hfil
\subfigure[]{
\includegraphics[scale=0.6]{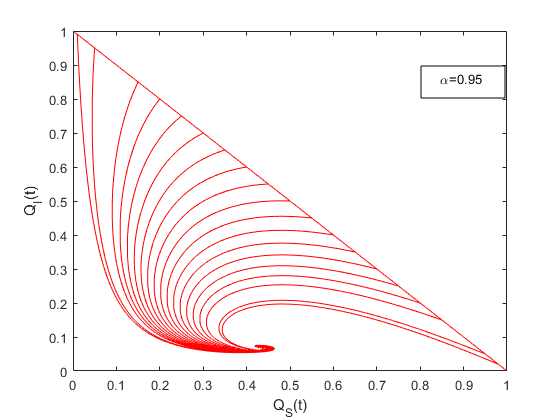}}\hfil
\subfigure[]{\includegraphics[scale=0.6]{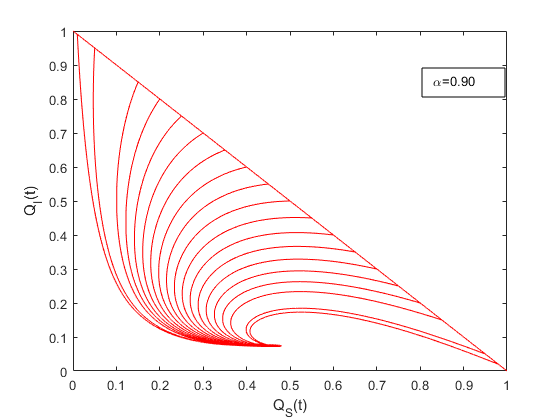}}
\caption{These sub-plots represent SI plane phase portraits for the SIRS model with fractional orders $\alpha \in \{1,\ 0.99,\ 0.95,\ 0.90\}$ and $R_0>1.$}
\label{fg14}
\end{figure}

 \section{Conclusion}\label{consec}
 In this work, we have presented and analyzed two epidemic models that are constructed with Caputo fractional derivative. Our fractional order modelling construction was motivated by the approach introduced by the author in \citep{wow9}. We have studied a modified fractional order SIS model from an existing mathematical model. A new Caputo based SIRS model is also proposed and analyzed. In both mathematical models, we have analytically and numerically studied local asymptotic stability of the equilibrium points (disease-free and endemic). Our numerical examples suggest that, fractional order models are appropriate for compartmental modeling of infectious diseases dynamics and can yield interesting results.

 \bibliography{sample2}
\end{document}